\def\L{{\sf L}}
\def\R {{\sf R}}
\def\NN{{\mathbb N}}
\def\onemodsixperm{
  \left(\sigma_\R^{-1} \sigma_\L\right)^3 \left(\sigma_\R^2 \sigma_\L^2 \right)^{(k-1)/2} \sigma_\L^2 
}
\newtheorem{theorem}{Theorem}[section]
\newtheorem{lemma}[theorem]{Lemma}
\begin{document}

\title{Groups of Rotating Squares 
\thanks{Research supported by %as part of a 1994 REU at Texas Christian University,
NSF grant DMS-9322070}
\author {
Ravi Montenegro \thanks{Department of Mathematical Sciences, University of Massachusetts Lowell, Lowell, MA 02474; email: \texttt{ravi\_montenegro@uml.edu}}
\and David A. Huckaby  \thanks{Department of Mathematics and Computer Science, Angelo State University, San Angelo, TX 76909; email: \texttt{david.huckaby@angelo.edu}}
\and Elaine White Harmon \thanks{Formerly at Department of Mathematics, McMurry University, Abilene, Texas 79697}
}
}

\date{}

\maketitle
 
\begin{abstract}
\noindent
This paper discusses the permutations that are generated by rotating $k \times k$ blocks of squares in a union of overlapping $k \times (k+1)$ rectangles.
It is found that the single-rotation parity constraints effectively determine the group of accessible permutations.
If there are $n$ squares, and the space is partitioned as a checkerboard with $m$ squares shaded and $n-m$ squares unshaded, then the four possible cases are $A_n$, $S_n$, $A_m \times A_{n-m}$, and the subgroup of all even permutations in $S_m \times S_{n-m}$, with exceptions when $k = 2$ and $k = 3$.
\end{abstract}

\section{Introduction}

Many games with a mathematical flavor involve moving blocks or balls according to some simple rotational or translational rule in an attempt to put them into some specified pattern. Generally not all permutations of the blocks are possible, and potential moves overlap at only a few elements.
For instance, with the Rubik's Cube the arrangement of the center squares on the faces is constrained and two rotations can affect at most $3$ pieces in common,
while in Hungarian Ring puzzles only $2$ of the marbles are shared by both rotations.
The $15$--puzzle involves very small moves: tiles numbered $1$ through $15$ are placed in a $4\times 4$ grid, and the only moves involve sliding a tile into the empty spot, so that only two tiles are affected by each move.
Given that in each case small moves and limited overlap are intended primarily to produce games that are more easily playable by humans, it is natural to ask what would happen in the opposite extreme.
In particular, would a rotational block-style puzzle with only a few, highly overlapping, rotations result in a large or a small number of accessible permutations of the blocks?

An extreme case of this is to consider rotating $k\times k$ blocks of square tiles in a $k\times (k+1)$ rectangle, or an overlapping union of such rectangles.
For instance, Figure \ref{fig:floor} illustrates a tile arrangement of three $k\times (k+1)$ rectangles for $k=6$, along with three potential rotations of $6 \times 6$ blocks of tiles.  (There are three other potential rotations in this particular arrangement.)  We find that in general---somewhat surprisingly---nearly all permutations of the tiles are possible, despite the minimal overlap shared by the $k\times (k+1)$ regions.

\begin{figure}[!ht]
\begin{center}
\includegraphics[height=2in]{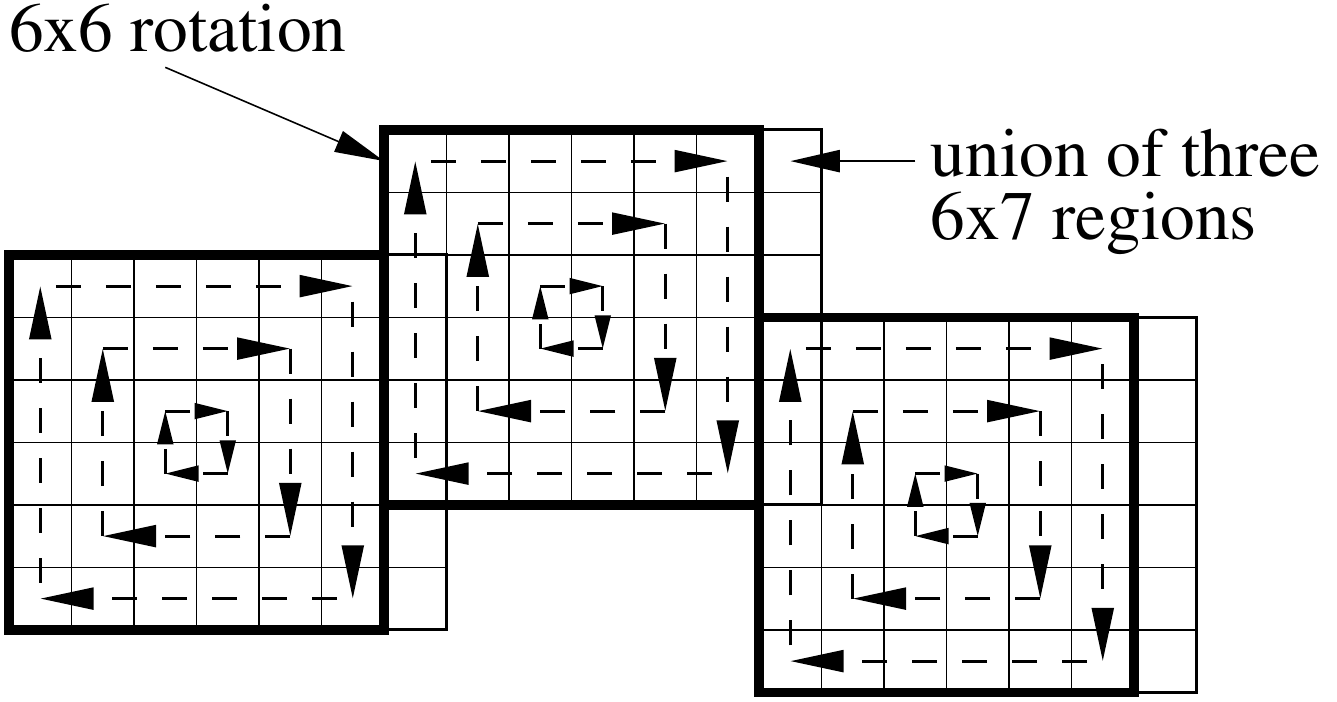}
\caption{The six possible $6\times 6$ block rotations generate all $119!$ permutations of blocks.}\label{fig:floor}
\end{center}
\end{figure}

%The motion in each $k \times k$ square can be thought of as picking up a $k \times k$ square section of blocks and continually twisting it $90^\circ$ to the right where the top left tile goes to the top right tile, top right is sent to the bottom right and the bottom right tile goes to the bottom left.

We use the notation $G=G(g_1, g_2, \dots, g_{q})$ to denote the group generated by all the possible rotations of $k \times k$ squares in the tile arrangement, where the $g_i$ represent the generators, and refer to this as the {\em puzzle group}.  The pattern  of tiles is {\em admissible} if it can be formed by overlapping $k \times (k+1)$ rectangles and/or $(k+1)\times k$ rectangles in a sequence such that if $k$ is even each new rectangle overlaps with the previous arrangement by at least one tile, and if $k$ is odd then it overlaps by at least two adjacent tiles.

When there are $n$ total tiles, the $k \times k$ rotations generate a subgroup of $S_n$. Furthermore, when $k$ is odd there are at least two disjoint orbits: With the tile arrangement colored like a checkerboard, the puzzle tiles permute like-colored squares. (See Figure \ref{fig:orbits}.)

\begin{figure}[ht]
\begin{center}
\includegraphics[height=1.5in]{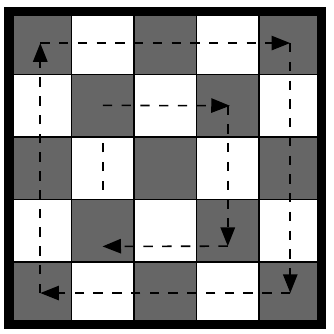}
\caption{The two orbits when $k$ is odd.}\label{fig:orbits}
\end{center}
\end{figure}

Additional restrictions on $G$ are immediately apparent. First, when $k\equiv 0\mod 4$, then each rotation results in an even permutation (of $k^2/4$ four-cycles), and so $G\leq A_n$. Second, when $k$ is odd, then each rotation results in two orbits of $(k^2-1)/8$ four-cycles each.  So assuming the entire union of $k \times (k+1)$ rectangles is colored as a checkerboard, with $m$ squares shaded and $n-m$ squares unshaded, then if $k\equiv 1,7\mod 8$, then $G\leq A_m\times A_{n-m}$, while if $k\equiv 3,5\mod 8$, then $G\leq Even(S_m\times S_{n-m})$, that is, $G$ is a subset of the even permutations in $S_m\times S_{n-m}$. In this paper we prove that in fact these are the only restrictions on the feasible permutations, except in two small cases.

\begin{theorem} \label{thm:main}
If $k>1$ then the puzzle group of an admissible figure on $n$ blocks is given by:

\begin{enumerate}
\item If $k$ is even and $n \neq 6$ then
$$
G = 
\begin{cases}
A_{n} & \textrm{if $k \equiv 0 \mod{4}$} \\
S_{n} & \textrm{if $k \equiv 2 \mod{4}$} 
\end{cases}
$$

\item If $k$ is odd and $n \neq 12$ then shade the figure as a checkerboard in black and white.
If there are $m$ elements in black and $n-m$ in white then
$$
G = 
\begin{cases} 
A_{m} \times A_{n-m}
 &\textrm{if $k \equiv 1,7 \mod{8}$} \\
Even(S_m\times S_{n-m})
 &\textrm{if $k \equiv 3,5 \mod{8}$} 
\end{cases}
$$
where $Even(S_m\times S_{n-m})=(A_{m} \times A_{n-m}) \cup ((S_{m} - A_{m}) \times (S_{n-m} - A_{n-m}))$ is the set of all even permutations in $S_m\times S_{n-m}$.

\item If $n = 6$, then
\[
G= PGL_2(5) \cong S_5
\]
for the projective linear group under an appropriate labeling of vertices.

If $k = 3$ and $n = 12$, then
\[
G\cong S_6
\]
where the projection of $G$ onto each orbit is $S_6$.
\end{enumerate}
\end{theorem}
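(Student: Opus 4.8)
The plan is to reduce everything to two ingredients: a direct analysis of the puzzle group of a single $k\times(k+1)$ rectangle, and a gluing principle that assembles an admissible figure one rectangle at a time. For gluing I would use the elementary fact that if $\Delta,\Delta'$ are finite sets with $|\Delta|,|\Delta'|\ge 3$ and $|\Delta\cap\Delta'|\ge 1$, then the copies of $A_\Delta$ and $A_{\Delta'}$ fixing the complements of their supports generate $A_{\Delta\cup\Delta'}$; applied colour class by colour class this handles the odd case, since admissibility forces the overlap of a newly attached rectangle with the existing figure to contain two adjacent, hence oppositely coloured, tiles, so both orbits stay connected. Together with the parity remarks from the introduction --- a rotation is a product of $k^2/4$ four-cycles for $k$ even and of two blocks of $(k^2-1)/8$ four-cycles for $k$ odd --- this yields the theorem for all admissible figures once it is known for a single rectangle, provided one also checks that attaching a rectangle to one of the two exceptional configurations ($k=2$ on $6$ tiles, $k=3$ on $12$ tiles) destroys the exceptional structure; I would verify this by exhibiting, in the enlarged figure, a $3$-cycle supported inside one orbit, which neither a copy of $S_5$ nor the ``diagonal'' $S_6$ can contain while remaining a proper subgroup of the target group.

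For a single rectangle write $\sigma_\L,\sigma_\R$ for the two $90^\circ$ rotations of the flush-left and flush-right $k\times k$ blocks. I would first check that $G=\langle\sigma_\L,\sigma_\R\rangle$ acts transitively on each orbit: for $k\ge 3$ the centre tile of the left block is fixed by $\sigma_\L$ but moved by $\sigma_\R$, and more generally any tile can be walked into the shared columns and then transferred between the cycle structures of the two rotations. Next --- and this is the heart of the matter --- I would produce, by an explicit word in $\sigma_\L,\sigma_\R$, a single $3$-cycle (or a short, carefully chosen permutation from which a $3$-cycle is extracted) supported on three tiles of the overlap region lying in one orbit. The word $\onemodsixperm$ is the prototype: the ``difference'' $\sigma_\R^{-1}\sigma_\L$ and the half-turns $\sigma_\L^2,\sigma_\R^2$ all act close to the shared columns, and a short computation --- carried out once for each residue of $k$ modulo a small number --- collapses such a word to the desired $3$-cycle. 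Arranging for the word to deliver $3$-cycles on several overlapping triples, then conjugating by $\sigma_\L,\sigma_\R$ to slide these triples across the orbit, the overlapping-alternating lemma merges all the resulting copies of $A_3$ into the full alternating group of that orbit; equivalently, one shows $G$ is primitive on the orbit and invokes Jordan's theorem that a primitive group containing a $3$-cycle contains the alternating group.

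With $A_m$ on the black orbit and $A_{n-m}$ on the white orbit both inside $G$, the rest is bookkeeping. For $k$ even there is one orbit, $G\supseteq A_n$, and $G=A_n$ or $S_n$ according as a generator is an even or odd permutation, i.e.\ as $k\equiv 0$ or $2\bmod 4$. For $k$ odd, $G\supseteq A_m\times A_{n-m}$, and $G$ is either this group or its index-$2$ overgroup $Even(S_m\times S_{n-m})$ according as the restriction of a generator to one orbit is even or odd, i.e.\ as $k\equiv 1,7$ or $3,5\bmod 8$; since a rotation acts on the two orbits with the same sign, nothing else can occur. The exceptional cases I would settle by direct computation: identifying $\langle\sigma_\L,\sigma_\R\rangle$ for the $2\times 3$ rectangle with $PGL_2(5)$ acting on $\mathbb{P}^1(\mathbb{F}_5)$, and checking that for the $3\times 4$ rectangle the group has order $720$ and projects isomorphically onto the symmetric group of each of its two $6$-element orbits, the two projections differing by the outer automorphism of $S_6$.

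The main obstacle is the explicit-word step, namely manufacturing a $3$-cycle uniformly in $k$: the half-turns and the element $\sigma_\R^{-1}\sigma_\L$ act on the shared columns in a way depending on $k$ modulo a small integer, so the construction must be organised into a handful of cases, and in each the tracking of which tile goes where is delicate --- this is precisely where $k=2$ and $k=3$ fail, the overlap region then being too short to host the needed configuration. The secondary obstacle is ensuring that the spreading-and-merging argument (or the primitivity argument) stays within a single colour class for odd $k$, so that one genuinely lands on $A_m\times A_{n-m}$ and not on anything larger than the parity constraint permits; everything else --- transitivity, the gluing lemma, and the sign tally --- is routine once the core computation is in place.
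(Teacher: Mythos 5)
Your overall architecture is the paper's: analyze a single $k\times(k+1)$ rectangle (transitivity, an explicit word producing a $3$--cycle, Jordan's theorem), glue rectangles, then finish by parity bookkeeping, with the $n=6$ and $n=12$ cases done by direct computation. The genuine divergence is the gluing mechanism, and that is where your plan has a gap. You glue with the lemma that overlapping copies of $A_\Delta$ and $A_{\Delta'}$ generate $A_{\Delta\cup\Delta'}$, which requires every newly attached rectangle to contribute the full alternating group on (each colour class of) its own support. For $k\ge 4$ that is available from the single-rectangle result, but for $k=2$ and $k=3$ it fails at \emph{every} inductive step, not just the first: a single $2\times 3$ rectangle yields only $PGL_2(5)$, which contains no $3$--cycle on its six tiles (it is sharply $3$-transitive of order $120$, so Jordan's theorem would otherwise force $A_6$), and a single $3\times 4$ rectangle yields only a diagonal $S_6$ inside $S_6\times S_6$, which contains no element acting as a $3$--cycle on one orbit and trivially on the other. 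So when the third, fourth, \ldots\ rectangle is attached in these cases, the glued-on piece never supplies the $A_{\Delta'}$ your lemma needs, and your proposed fix --- checking that attaching one rectangle to the exceptional configuration destroys the exceptional structure --- repairs only the first attachment. The paper avoids this by gluing a weaker property: it shows inductively that the stabilizer of a suitably chosen point remains transitive (hence the whole figure is doubly transitive on each orbit), using only that the new rectangle's group is transitive on each of its colour classes and overlaps the old figure away from that point; Jordan's theorem is then invoked once, for the entire figure, with $3$--cycles exhibited in a two-rectangle subconfiguration. To keep your route you would need to strengthen the gluing lemma to something like ``$A_\Sigma$ together with an overlapping transitive group generates the alternating group of the union when $|\Sigma|$ exceeds half of it,'' which does hold here but is not what you stated.

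A secondary caution: you describe the $3$--cycle word as a short computation done once per residue of $k$ modulo a small number. In the paper this is by far the heaviest part of the argument: for odd $k$ the exponent $\alpha$ in $(\sigma_\R^{-1}\sigma_\L)^{\alpha}(\sigma_\R^2\sigma_\L^2)^{(k-1)/2}\sigma_\L^2$ must be chosen from $\{2,3,4,12\}$ according to $k$ modulo $72$, and certifying that exactly one cycle of the resulting permutation has length divisible by $3$ requires a complete census of the cycle structure, with sub-cases depending on $k$ modulo $360$. This is labor rather than a conceptual obstacle, but it is far from a two- or three-case check. The remaining ingredients --- double transitivity of the single rectangle via commutators, the sign tally on each orbit, and the identification of the two exceptional groups --- are as you describe.
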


%The Theorem implies the following Corollary.
%
%\begin{corollary}
%Choose a rectangular tile arrangement of integral dimensions, i and j, where either i or j is greater than k and the other is at least k.  The group for the puzzle is determined by $k$:
%
%\begin{enumerate}
%\item If $k$ is even and $k\neq2$ or $Ia \neq 6$, then
%$$
%G = \begin{cases} 
%A_{ij} &\textrm{if  $k \equiv 0 \mod{4}$}\\
%S_{ij} &\textrm{if $k \equiv 2 \mod{4}$} 
%\end{cases}
%$$
%
%\item If $k$ is odd and $k \neq 3$ or $ij \neq 12$, then
%$$
%G = 
%\begin{cases} 
%A_{\lceil \frac{ij}2 \rceil} \times A_{\lfloor \frac{ij}2 \rfloor}
%  &\textrm{if $k \equiv 3,5 \mod{8}$} \\
%Even(S_{\lceil \frac{ij}2 \rceil} \times S_{\lfloor \frac{ij}2 \rfloor}) 
%  &\textrm{if $k \equiv 1,7 \mod{8}$}
%\end{cases}
%$$
%where $Even(G)$ denotes the set of all even permutations in $G\subset S_n$.
%
%\item If $k = 2$ and $ij=6$, then
%$$
%G =  \Theta_0 \cong S_5
%$$
%and if $k = 3$ and $ij=12$, then
%$$
%G\cong S_6
%$$
%\end{enumerate}
%
%\end{corollary}

%*********************** The proof ****************************

\section{The proof}

Our proof is modeled after Wilson's approach to finding the permutation group for a generalization of the $15$--puzzle problem \cite{Wil74.1}.
In order to explain the method we require a few definitions.

% A permutation group $G$ acting on set $X$ with $|X|=n$ will be treated as a subset of $S_n$, i.e. of the set of permutations of $X=\{1,2,\ldots,n\}$.
% Group action will be written as acting from the left, with $gh(x) = g(h(x))$.
Recall that a permutation group $G$ acting on set $X$ is {\em transitive} if it can send any $x$ to any $y$ (i.e. $\forall x,y\in X,\,\exists g\in G:\,gx=y$),
while it is {\em primitive} if it is transitive and does not preserve any bipartition (i.e. $\forall X'\subset X,\,\exists g\in G:\,gX'\not\in\{X',(X')^c\}$).
In particular, a {\em doubly transitive} group (transitive with $stab(x) = \{g\in G:\,gx=x\}$ transitive for some $x$) is primitive.

{\em Jordan's Theorem} says that a primitive group $G$ containing a $3$-cycle is either $A_n$ or $S_n$ (e.g. Theorem 13.3 of \cite{Wie64.1}).
With this in mind, our approach to proving the theorem is to first show that $G$ is doubly transitive on each of its orbits, and then show that $G$ contains a 3-cycle on each orbit (with two exceptions). It follows that $G$ contains the product of the alternating groups on the orbits, which leaves only a small number of potential groups to consider.

%Suppose the result holds when the region is a $k\times (k+1)$ rectangle with $k>3$.
%It then holds for the $(k+1)\times k$ case by symmetry.
%In both cases the puzzle group is the set of all possible permutations on each orbit, subject only to parity constraints (odd or even permutations).
%Forming an admissible region by overlapping many such regions should likewise result in all possible permutations, subject to the same orbit and parity constraints.
%This will be made rigorous in Section \ref{subsec:proof}.
%
%With this in mind
We first consider the most basic type of puzzle group, that on a $k\times(k+1)$ rectangle.
The general case will be derived from this at the end of the proof.

There are only two generators to consider; denote the generator on the left by $\sigma_\L$, i.e. a clockwise rotation of the left $k\times k$ square region, and the one on the right by $\sigma_\R$, i.e. a clockwise rotation of the right $k\times k$ square region.
The puzzle group is $G=\langle \sigma_\L,\,\sigma_\R\rangle$, the group generated by $\sigma_\L$ and $\sigma_\R$.
Label the tiles of the rectangle by their Cartesian coordinates $(i,j)\in\{1,2,\ldots,k\}\times\{1,2,\ldots,k+1\}$,
with the tile in the upper left corner denoted $(1,1)$, the tile to its right denoted $(1,2)$, and so forth.
Then $\sigma_\L((1,1))=(1,k)$, for instance.
Let ${}_{}^y x=yxy^{-1}$ denote conjugation, so that ${}_{}^y x(a)$ denotes the location of tile $a$ after $y(x(y^{-1}(a)))$.
The conjugate is relatively easy to compute by using the property that if $x(a)=b$ then ${}_{}^y x(y(a))=y(b)$.

%***************** Building Blocks: Some small products of generators **********************

\subsection{Building Blocks: Some small products of generators}

It will be useful to note the actions of the generators:
\begin{eqnarray*}
\sigma_\L (i,j)  &=& 
\begin{cases}
(j,k+1-i) & \textrm{for\ \ $j \neq k+1$} \\
(i,k+1)   & \textrm{for\ \ $j = k+1$}
\end{cases} \\
\sigma_\R (i,j) &=& 
\begin{cases}
(j-1,k+2-i) & \textrm{for\ \ $j \neq 1$} \\
(i,1)       & \textrm{for\ \ $j=1$}
\end{cases}
\end{eqnarray*}

%$$
%\sigma_\L^2 (i,j)  = 
%\begin{cases}
%(k+1-i,k+1-j) & \textrm{for\ \ $j \neq k+1$} \\
%(i,k+1)       & \textrm{for\ \ $ j=k+1$}
%\end{cases}
%\quad
%\sigma_\R^2 (i,j) =
%\begin{cases}
%(k+1-i,k+3-j) & \textrm{for\ \ $j \neq 1$} \\
%(i,1)         & \textrm{for\ \ $ j=1$}
%\end{cases}
%$$

The most common approach to proving puzzle groups is to work with a {\em commutator}:
\[
[g,h]=g\,h\,g^{-1}\,h^{-1}
\]
This tends to involve simple shifts from the identity that can be easier to work with than the original action in the puzzle.

%Observe that if $j\not\in\{1,k+1\}$ then $\sigma_\R(i,j)=(\sigma_\L(i,j)-1,\sigma_\L(i,j)+1)$, 
%i.e. the right generator $\sigma_\R$ acts like the left generator $\sigma_\L$ shifted to the right and up by one tile.
%This suggests that $\sigma_\L\sigma_\R^{-1}(i,j)$ will merely shift $(i,j)$ by a few tiles,
%and the commutator $[\sigma_\L,\sigma_\R^{-1}]=\left(\sigma_\L\sigma_\R^{-1}\right)\,\left(\sigma_\R^{-1}\sigma_\L\right)^{-1}(i,j)$ 
%will merely shift $(i,j)$ by a few tiles as well.

We use a few commutators when showing double-transitivity.
\begin{eqnarray*}
[\sigma_\L,\sigma_\R^{-1}](i,j) &=& (i,j-2) \textrm{\qquad when \qquad $i>1$\ \ and\ \ $j>2$} \\
{[}\sigma_\L,\sigma_\R{]}(i,j) &=& (i+2,j)     \textrm{\qquad when \qquad $i<k-1$\ \ and\ \ $1<j<k+1$}
\end{eqnarray*}
We are not concerned with the action outside the specified regions, so we do not describe it here.

For construction of $3$--cycles we find that other expressions which take into account the order of the rotations can be easier to work with.
In this section we develop those building blocks.

The two simplest formulas we can derive are simply rotations by $360^{\circ}$, so $\sigma_\L^4=id$ and $\sigma_\R^4=id$.
More generally, since $\sigma_\L$ and $\sigma_\R$ send a tile $(i,j)$ to nearly the same location,
then a product of four $\sigma_\L$ and $\sigma_\R$ terms will involve only minor shifts for most tiles.
We write out a handful of such expressions and then combine them to get $3$--cycles.

%This is indeed what we do, in particular with 
%$\sigma_\L\sigma_\R\sigma_\L\sigma_\R$, $\sigma_\R^2\sigma_\L^2$, $\sigma_\R^{-1}\sigma_\L=\sigma_\R^3\sigma_\L$ and ${}_{}^{\sigma_\L} (\sigma_\R^{-1})\,\sigma_\R$.

A simple example we will work with is
\[
\sigma_\R^2 \sigma_\L^2 (i,j) = 
\begin{cases}
(i,j+2) & \textrm{if $j < k$} \\
(k+1-i,j-(k-1)) & \textrm{if $j \ge k$}
\end{cases}
\]
Tiles are shifted to the right by $+2$, and when this wraps around the boundary then they are also flipped vertically.
Another useful case is
\[
\sigma_\R^{-1} \sigma_\L (i,j) = 
\begin{cases}
(i+1,j+1) & \textrm{if $i < k$ and $j < k+1$} \\
(j,1)   & \textrm{if $i = k$ and $j < k+1$} \\
(1,i+1) & \textrm{if $j=k+1$}
\end{cases}
\]
Most tiles are shifted down and to the right by one diagonally.

The action of $(\sigma_\R^{-1}\sigma_\L^{-1})^2$ is a bit more complicated:
\[
\left(\sigma_\R^{-1} \sigma_\L^{-1}\right)^2(i,j) = 
\boxed{
\begin{array}{c|ccc|c} 
(k-1,k+1) & (k,k+1) & \hdots  & (3,1)   & (2,1) \\
\hline
(k-2,k+1) & {\mathrm (2,2)   } & {\mathrm \hdots }  & {\mathrm (2,k) }  & (1,1) \\
\vdots    & {\mathrm \vdots    } & {\mathrm \ddots }  & {\mathrm \vdots  }  & \vdots \\
(1,k)   & {\mathrm (k,2)   } & {\mathrm \hdots }  & {\mathrm (k,k) }  & (1,k-1) 
\end{array}
}
\]
The location of $(i,j)$ in the table is the location it is mapped to.
This shows that a cycle formed by the left side, top, and right side
of the rectangle rotates clockwise by $k+1$ tiles, and the rest
remains fixed.

%*********************** Double-transitivity ****************************

\subsection{Double transitivity}

\begin{lemma} \label{lem:double-transitive}
The puzzle group for a $k\times (k+1)$ rectangle is doubly transitive on each orbit.
\end{lemma}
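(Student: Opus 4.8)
\emph{Proof plan.}
The plan is to use the standard criterion: a transitive group on a set $\Omega$ is doubly transitive exactly when the stabilizer of one point $x$ is transitive on $\Omega\setminus\{x\}$. So I would prove two things in order: (i) $G=\langle\sigma_\L,\sigma_\R\rangle$ is transitive on each of its orbits — re-confirming along the way that there is a single orbit when $k$ is even and precisely the two checkerboard classes when $k$ is odd — and (ii) for a conveniently chosen $x$, the stabilizer $\mathrm{stab}_G(x)$ is transitive on the rest of the orbit of $x$. Together with the fact that double transitivity implies primitivity, this is exactly what is needed to feed Jordan's Theorem later.

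For (i) I would work entirely with the building blocks of the previous subsection. The commutators $[\sigma_\L,\sigma_\R^{-1}]$ and $[\sigma_\L,\sigma_\R]$ shift interior tiles by $(0,-2)$ and $(2,0)$ respectively, and conjugating them by the $180^\circ$ rotations $\sigma_\L^2$ and $\sigma_\R^2$ reverses those shifts; together they let $G$ move any interior tile to any interior tile having the same row parity and the same column parity. The diagonal shift $\sigma_\R^{-1}\sigma_\L$, acting as $(i,j)\mapsto(i+1,j+1)$, then fuses the remaining two parity sub-lattices inside a single checkerboard colour, and — only when $k$ is even — the single rotation $\sigma_\L$ swaps the two colours (when $k$ is odd it preserves them, which is just the two-orbit phenomenon). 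Finally the boundary is reached using the wrap-around behaviour of $\sigma_\R^{-1}\sigma_\L$ (bottom row onto the left column, right column onto the top row) and the rotation $(\sigma_\R^{-1}\sigma_\L^{-1})^2$ of the left side, top, and right side; tracking colours through all of these moves yields transitivity on each orbit and the claimed orbit structure.

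For (ii) I would take $x=(1,1)$, the corner lying in the left $k\times k$ block only. Then $\sigma_\R$ fixes the entire left column pointwise, so $\sigma_\R\in\mathrm{stab}_G(x)$, and likewise $\sigma_\L\,\sigma_\R^{\,j}\,\sigma_\L^{-1}\in\mathrm{stab}_G(x)$ for every $j$, since $\sigma_\L^{-1}(1,1)=(k,1)$ is again fixed by $\sigma_\R$. A short support computation shows that the supports of $\sigma_\R$ and of its relocated conjugate $\sigma_\L\,\sigma_\R\,\sigma_\L^{-1}$ together exhaust $\Omega\setminus\{x\}$, so it remains to show that the subgroup they generate — equivalently $\mathrm{stab}_G(x)$ — is transitive there. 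I would establish this by re-running the argument of (i) for the overlapping pair of rotations $\{\sigma_\R,\ \sigma_\L\sigma_\R\sigma_\L^{-1}\}$, which behaves much like a puzzle configuration in its own right. This last step is where I expect the real work to lie: one must check that the concentric ``ring'' orbits of $\sigma_\R$ and those of $\sigma_\L\sigma_\R\sigma_\L^{-1}$ interlock well enough to fuse into a single orbit, keeping careful track of coordinates through the conjugation — bookkeeping that also forces the smallest rectangles (notably $k=2$ and $k=3$) to be verified directly. Granting (i) and (ii), $G$ is doubly transitive on each orbit.
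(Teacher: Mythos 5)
Your overall strategy is the paper's: establish transitivity and then show the stabilizer of the corner $(1,1)$ is transitive on the rest of the orbit. The choice of base point and the observation that $\sigma_\R$ fixes column $1$ pointwise also match. But the crux of the lemma is exactly the step you defer, and the way you set it up makes that step genuinely hard rather than merely tedious. The conjugate $\sigma_\L\sigma_\R\sigma_\L^{-1}$ is not a block rotation: $\sigma_\L$ rotates only columns $1$ through $k$ while fixing column $k+1$, so the conjugate's cycles are the $\sigma_\L$-images of the square ``rings'' of $\sigma_\R$, i.e.\ broken, L-shaped cycles straddling the fixed column. Showing that these interlock with the rings of $\sigma_\R$ to give a single orbit on $\Omega\setminus\{(1,1)\}$ is not a re-run of your part (i) (which is built on commutator translations, not on fusing rotation rings), and no mechanism for it is given. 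A smaller slip: $\langle\sigma_\R,\ \sigma_\L\sigma_\R\sigma_\L^{-1}\rangle$ is a subgroup of the stabilizer, not ``equivalently'' the stabilizer; that happens to be harmless here, but the union-of-supports observation by itself proves nothing about transitivity.

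The repair is already sitting in your part (i): the commutators $[\sigma_\L,\sigma_\R^{-1}]$ and $[\sigma_\L,\sigma_\R]$ themselves fix $(1,1)$ (since $\sigma_\R$ fixes column $1$ pointwise and $\sigma_\L$ carries $(1,1)$ and $(k,1)$ around within that column), so the translation-plus-rotation argument can be run entirely inside $stab(1,1)$, proving (i) and (ii) in one pass. That is what the paper does: it takes $a$ to be the center of $\sigma_\L$, defines concentric boxes $S_d$ about the center of $\sigma_\R$, and shows by induction that $\langle\sigma_\R,\ [\sigma_\L,\sigma_\R^{-1}],\ [\sigma_\L,\sigma_\R]\rangle$ carries $S_{d-1}$ onto a set containing $S_d$ --- the commutator $[\sigma_\L,\sigma_\R^{-1}]$ pushes tiles onto the left edge of $S_d$ and powers of $\sigma_\R$ sweep that edge around the rest of the annulus $S_d\setminus S_{d-1}$ (with one extra corner move via $[\sigma_\L,\sigma_\R^{-1}][\sigma_\L,\sigma_\R]$ when $k$ is even). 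If you reorganize your proof along these lines, keeping your commutator translations but discarding the conjugate-rotation generator, the gap closes; as written, the key transitivity claim for the stabilizer remains unproved.
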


\begin{proof}
We begin with the case when $k$ is odd.

Let $E$ be the set of tiles with $i+j$ even (the shaded region in Figures \ref{fig:orbits} and \ref{fig:trans-induction}).
This is one of the two orbits of $G$ on the set of tiles.
Since $E^c$ is just the reflection of $E$ through the centerpoint $i\to k+1-i$ and $j\to k+2-j$ then double-transitivity of $E$
also implies double-transitivity on the set of tiles with $i+j$ odd.

Let $a=\left(\frac{k+1}{2},\,\frac{k+1}{2}\right)$ be the square immediately to the left of the center of $\sigma_\R$; this is the center of the $\sigma_\L$ rotation.
Rotations preserve the parity of blocks, i.e. if $(i,j)\in E$ then $\sigma_\L(i,j)\in E$ and $\sigma_\R(i,j)\in E$, so $G\,a = \langle \sigma_\L,\,\sigma_\R \rangle \subseteq E$.

Given $d$ in $\NN$ let $S_0=\{a\}$ and for $d\geq 1$ define
\[
S_d = \left\{(i,j)\in E\setminus\{(1,1)\}:\, \left| i - \frac{k+1}{2}\right| \leq d,\,\left| j - \frac{k+3}{2}\right| \leq d\right\}
\]
This is just those tiles with both $i$ and $j$ coordinates at most $d$ from the center of the $\sigma_\R$ rotation.
We show that $\langle \sigma_\R,\ [\sigma_\L,\sigma_\R^{-1}],\ [\sigma_\L,\sigma_\R]\rangle\, S_{d-1} \supseteq S_d$,
and so by induction $\langle \sigma_\R,\ [\sigma_\L,\sigma_\R^{-1}],\ [\sigma_\L,\sigma_\R]\rangle\,a \supseteq E\setminus \{(1,1)\}$.
Since $\langle \sigma_\R,\ [\sigma_\L,\sigma_\R^{-1}],\ [\sigma_\L,\sigma_\R]\rangle \leq stab(1,1)$ then $stab(1,1)\,a = E\setminus\{(1,1)\}$.

For the base case, when $d=1$ then $a\in S_1$ and $S_1=\bigcup_{\ell=0}^3 \sigma_\R^\ell(a)$, so the claim is trivial.

\begin{figure}[ht] \label{fig:boundaries}
\begin{center}
\includegraphics[height=1.75in]{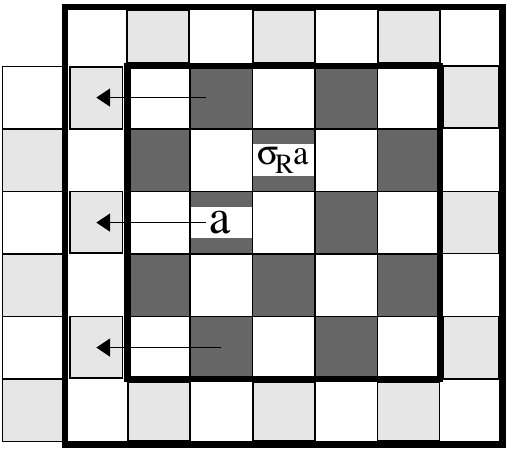}
\caption{$[\sigma_\L,\sigma_\R^{-1}]\,S_2$ contains the left boundary of $S_3$.}\label{fig:trans-induction}
\end{center}
\end{figure}

For the inductive step assume that $d\geq 2$.
If $i\geq 2$ and $j\geq 3$ then $[\sigma_\L,\sigma_\R^{-1}](i,j)=(i,j-2)$,
and so $[\sigma_\L,\sigma_\R^{-1}]\,S_{d-1}$ includes the left boundary of $S_d$.
Rotation of the left boundary under $\langle \sigma_\R\rangle$ includes all of $S_d\setminus S_{d-1}$.
The final case, when $d=\frac{k+1}{2}$, does not require the rotation and completes the proof that $stab(1,1)\,a = E\setminus\{(1,1)\}$.

When $k$ is even let $E$ be the set of all tiles and $a=\left(\frac{k+2}{2},\,\frac{k+2}{2}\right)$.
Then $G\,a\subseteq E$ trivially.
The method of proof is the same, but in the inductive step $[\sigma_\L,\sigma_\R^{-1}]\,S_{d-1}$ now misses the top and bottom of the left boundary of $S_d$ when $d<\frac{k+2}{2}$.
However, $[\sigma_\L,\sigma_\R^{-1}]\,[\sigma_\L,\sigma_\R](i,j)=(i+2,j-2)$ when $i<k-1$ and $j>1$,
and so if $d<\frac{k+2}{2}$ then $\langle \sigma_\R,\ [\sigma_\L,\sigma_\R^{-1}],\ [\sigma_\L,\sigma_\R]\rangle\, S_{d-1}$ also contains the lower left tile of $S_d$.
Applying powers of $\sigma_\R$ to this covers $S_d\setminus S_{d-1}$.
\end{proof}

%************************ 3-cycle ************************

\subsection{Finding a three-cycle}

Having established double transitivity, we now seek a 3--cycle.

\begin{lemma} \label{lem:3-cycle}
If $k>3$ then there is a 3--cycle in the $k\times (k+1)$ rectangle with generators $\sigma_\L$ and $\sigma_\R$.
More precisely:

\begin{enumerate}
\item If $k$ is even then
\[
\Big(^{\left((\sigma_\R^2\sigma_\L^2)^{\lceil k/4\rceil }\right)} {} \sigma_3\,\left(\sigma_\R^{-1} \sigma_\L^{-1}\right)^2\Big)^{20}
\]
is a $3$--cycle
where $\sigma_3=\sigma_\R$ when $k\equiv 0\mod 4$ and $\sigma_3=\sigma_\L$ when $k\equiv 2\mod 4$.

\item If $k$ is odd then 
\begin{equation} \label{eqn:odd-case}
\left(
(\sigma_\R^{-1} \sigma_\L)^\alpha 
(\sigma_\R^2 \sigma_\L^2)^{(k-1)/2}
\sigma_\L^2
\right)^{\beta/3}
\end{equation}
is a $3$--cycle where
\[
\alpha = 
\begin{cases}
3  & \textrm{if $k\not\equiv 3\mod{18}$} \\
4  & \textrm{if $k\equiv 3\mod{72}$} \\
2  & \textrm{if $k\equiv 21,\, 57\mod{72}$} \\
12 & \textrm{if $k\equiv 39\mod{72}$}
\end{cases}
\]
and $\beta$ is the order of 
$\left(\sigma_\R^{-1} \sigma_\L\right)^\alpha \left(\sigma_\R^2\sigma_\L^2\right)^{(k-1)/2}\sigma_\L^2$.
A 3--cycle on the other orbit may be obtained by swapping $\sigma_\R$ and $\sigma_\L$.
\end{enumerate}

\end{lemma}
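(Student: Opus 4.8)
The plan is to verify the two displayed words by direct computation: write each as an explicit permutation of the tiles using the building blocks of the previous subsection, read off its cycle type, and check that raising it to the indicated power leaves a single $3$--cycle. The elementary fact behind this is that if $\pi$ has a \emph{unique} cycle whose length is divisible by $3$ and that cycle has length exactly $3$, then $\pi^{e}$ is a $3$--cycle for every exponent $e$ that is coprime to $3$ and makes each of the remaining cycles of $\pi$ trivial. In the odd case one uses $e=\beta/3$, which is automatic since then $\beta=\mathrm{ord}(\pi)$ has $3$--adic valuation $1$; in the even case one uses $e=20$, which imposes the extra requirement that all the other cycle lengths divide $20$.

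\emph{Even case.} Write $c=(\sigma_\R^2\sigma_\L^2)^{\lceil k/4\rceil}$ and $P={}^{c}\sigma_3\,(\sigma_\R^{-1}\sigma_\L^{-1})^2$. We already know that $(\sigma_\R^{-1}\sigma_\L^{-1})^2$ is a single $(3k-1)$--cycle on the frame of the rectangle (the top row together with the two side columns) and the identity on the $(k-1)\times(k-1)$ interior, while $\sigma_3$ is a product of $k^2/4$ disjoint four--cycles on a $k\times k$ block. Conjugating by $c$, a power of the ``shift right by $2$ with a flip on wrap--around'' map, translates that block into the boundary region; the exponent $\lceil k/4\rceil$ together with the choice $\sigma_3=\sigma_\R$ for $k\equiv0\bmod 4$ and $\sigma_3=\sigma_\L$ for $k\equiv2\bmod 4$ are exactly what make ${}^{c}\sigma_3$ meet the frame in a small, explicitly describable set. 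One then composes the two permutations on the union of their supports, follows the resulting cycles, and checks that $P$ consists of a single $3$--cycle together with cycles whose lengths all divide $20$; therefore $P^{20}$ is a $3$--cycle.

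\emph{Odd case.} Set $\rho=(\sigma_\R^2\sigma_\L^2)^{(k-1)/2}\sigma_\L^2$. Because the column cycle of $\sigma_\R^2\sigma_\L^2$ picks up exactly one vertical flip per full turn, $(\sigma_\R^2\sigma_\L^2)^{(k+1)/2}$ is the flip $(i,j)\mapsto(k+1-i,j)$, so $(\sigma_\R^2\sigma_\L^2)^{(k-1)/2}$ is that flip followed by a shift left by $2$; combined with $\sigma_\L^2$ (the half--turn of the left block) this shows $\rho$ is an explicit involution, reversing the order of columns $1,\dots,k-2$ within each row and acting by a reflection on the three rightmost columns. Since $\sigma_\R^{-1}\sigma_\L$ is the diagonal ``down and to the right by one'' with its boundary wrap--around, $\pi=(\sigma_\R^{-1}\sigma_\L)^{\alpha}\rho$ is completely explicit, and one computes its cycles separately on $E=\{(i,j):i+j\text{ even}\}$ and on $E^{c}$. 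What must be verified is that, for the listed value of $\alpha$, exactly one of the two orbits carries a single cycle of length divisible by $3$, that this cycle has length exactly $3$, and that $\pi$ restricted to the other orbit has order coprime to $3$; then $\pi^{\beta/3}$ is that $3$--cycle, and applying the central reflection $i\mapsto k+1-i,\ j\mapsto k+2-j$, which interchanges $\sigma_\L$ with $\sigma_\R$ and $E$ with $E^{c}$, gives a $3$--cycle on the remaining orbit.

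The main obstacle is this last verification, and specifically controlling the $3$--part of the long cycle: its length is a $\gcd$--type function of $k$, $k+1$, $\alpha$ and a few small constants, and we must be sure that its $3$--adic valuation is exactly $1$, so that the power extracted is one $3$--cycle rather than a product of several. With $\alpha=3$ this succeeds whenever $k\not\equiv3\bmod 18$; the residues $k\equiv3\bmod 18$ are exactly those where an extra factor of $3$ intrudes, and the four sub--cases modulo $72$ record the perturbed $\alpha$ that cancels it (the modulus $72=8\cdot 9$ being natural because the problem already depends on $k\bmod 8$). Working through these residue classes, along with the boundary cases in the cycle tracing, is the bulk of the argument; $k=3$ is precisely the case where no $\alpha$ in this family yields a $3$--cycle, which is why it is excluded.
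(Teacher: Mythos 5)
Your strategy is the same as the paper's: both reduce the lemma to computing the full cycle decomposition of the two displayed words and then invoking the elementary fact (which you state correctly at the outset) that a permutation with a unique cycle of length divisible by $3$, that cycle being a $3$--cycle, becomes a $3$--cycle when raised to a suitable power. The preliminary identifications you do make are accurate: for even $k$ the permutation $\left(\sigma_\R^{-1}\sigma_\L^{-1}\right)^2$ is indeed a single $(3k-1)$--cycle on the frame (since $\gcd(k+1,3k-1)=\gcd(k+1,4)=1$), and $(\sigma_\R^2\sigma_\L^2)^{(k-1)/2}\sigma_\L^2$ is the involution you describe. But the lemma is a purely computational assertion, and all of its content lives in the steps you defer: ``one then composes the two permutations \dots and checks,'' ``what must be verified is \dots,'' ``working through these residue classes \dots is the bulk of the argument.'' The paper's proof consists precisely of that bookkeeping: for even $k$ it exhibits the decomposition into fixed points, one $3$--cycle, $4$--cycles and a $10$--cycle (whence the exponent $20$); for odd $k$ it traces the long cycles through the boundary wrap--arounds and records their lengths ($k+6$ when $k\equiv\pm1\bmod 6$, $(k+6)/3$ when $k\equiv 9,15\bmod{18}$, and various linear and quadratic expressions in $k$ in the $k\equiv 3\bmod{18}$ subcases), checking each against divisibility by $3$. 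None of that verification appears in your write-up, so what you have is an outline of the paper's proof rather than a proof.

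One further caution: in your closing paragraph you say one must ensure the long cycle's ``$3$--adic valuation is exactly $1$.'' That is backwards. The long cycles must have length \emph{coprime} to $3$ (valuation $0$); if any had valuation $1$, then raising to the power $\beta/3$ would turn that cycle into a product of several disjoint $3$--cycles rather than killing it, and the result would not be a single $3$--cycle. Your opening paragraph states the correct criterion (a \emph{unique} cycle of length divisible by $3$, of length exactly $3$), so this is a slip rather than a conceptual error --- but it is exactly the kind of slip that the deferred cycle-by-cycle computation exists to catch, which underscores that the computation cannot be waved away.
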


\begin{proof}
\noindent {\bf Case 1 ($k$ even): } We consider $k\equiv 0\mod 4$. The methodology when $k\equiv 2\mod 4$ is the same, but with different cycle structures.

The actions of $\left(\sigma_\R^{-1} \sigma_\L^{-1}\right)^2$ and $\sigma_\R^2\sigma_\L^2$ were described earlier.
The exponent in the conjugated term is 
$$
\left(\sigma_\R^2 \sigma_\L^2 \right)^{k/4} (i,j) =
\begin{cases} 
(i,j+\frac k2)         & \textrm{if\ \ $j\le\frac k2+1$} \\
(k+1-i,j-(\frac k2+1)) & \textrm{if\ \ $j>\frac k2+1$} 
\end{cases}
$$
% i.e. in both cases the $j$ term is $j+\frac k2\mod (k+1)$.
It is a short exercise to verify that $\displaystyle ^{\left((\sigma_\R^2\sigma_\L^2)^{k/4}\right)} {} \sigma_\R$ consists of 
\begin{itemize}
\item $1$-cycles: $(i,\frac k2+1)$ where $1\leq i\leq k$
\item $4$-cycles: $((i,j),\, (\frac k2+j,\frac k2+1+i),\, (i,k+2-j),\, (\frac k2+j,\frac k2+1-i) )$ where $i,\,j \leq \frac k2$
\end{itemize}
%This accounts for all $k(k+1)$ elements. 
The cycle structure of $^{\left((\sigma_\R^2\sigma_\L^2)^{k/4}\right)} {} \sigma_\R \,\left(\sigma_\R^{-1} \sigma_\L^{-1}\right)^2$ then consists of
\begin{itemize}
\item $1$--cycles: $(i,\frac k2+1)$ where $2\leq i \leq k$
\item $3$--cycle: $((1,\frac k2+1),\, (\frac k2 ,k),\, (\frac k2+2,1))$
\item $4$--cycles: One $4$ cycle for each tile
\[
\left\{(i,j)\,:\,2\leq i < \frac k2\ \textrm{and}\ 1\leq j\leq \frac k2\right\} \cup \left\{\left(\frac k2,j\right)\,:\,2 \leq j\leq k-1,\,j\neq \frac k2+1\right\}
\]
% Here are the $4$--cycles.
%\item $4$--cycles: $((i,j),\, (\frac k2+j,\frac k2+1+i),\, (i,k+2-j),\, (\frac k2+j,\frac k2+1-i) )$ where $2\leq i<\frac k2,\,2\leq j \leq \frac k2$
%\item $4$--cycles: $((i,1),\, (\frac k2+i-1,\frac k2),\, (1,i-1),\, (\frac k2+1,\frac k2+1-i) )$ where $2\leq i < \frac k2$
%\item $4$--cycles: $((i,1),\, (\frac{3k}{2}+3-i,\frac k2+2),\, (1,i-1),\, (\frac k2,\frac{3k}{2}+2-i))$ where $\frac k2+3\leq i\leq k$
%\item $4$--cycles: $((i,k+1),\, (\frac k2,\frac k2-i),\, (k-i,k+1),\, (\frac k2+1,\frac k2+1+i))$ where $i<\frac k2-1$
%\item $4$-cycle: $((1,k),\, (\frac k2,\frac k2),\, (k,k+1),\, (\frac k2+2,\frac k2+2))$
\item $10$--cycle: One cycle containing $(k/2,1)$ and $(k/2,k+1)$, among others.
% Here is the $10$--cycle.
% $((\frac k2,1),\, (k-1,\frac k2),\, (1,\frac k2-1),\, (\frac k2+1,1),\, (k,\frac k2),\, (1,\frac k2),\, (\frac k2,k+1),\, (\frac k2+1,k+1),\, (\frac k2+1,k),\, (\frac k2-1,k+1))$
\end{itemize}
This gives $k(k+1)$ tiles, and so it is the complete cycle structure.
Taking the $20^{th}$ power leaves only the square of the $3$--cycle, which is also a $3$--cycle. %, which verifies Lemma \ref{lem:3-cycle} for (1).

\bigskip\noindent  {\bf Case 2 ($k$ odd) :}
It suffices to show existence of a $3$--cycle on the orbit $E=\{(i,j):\,i+j\ \textrm{even}\}$
since reflecting the $k\times(k+1)$ region through its centerpoint, i.e. $i\to k+1-i$ and $j\to k+2-j$, swaps $E$ and $E^c$ as well as $\sigma_\L$ and $\sigma_\R$, transforming the $3$--cycle on $E$ into a $3$--cycle on $E^c$.

The main term is
\[
\left(\sigma_\R^2 \sigma_\L^2 \right)^{(k-1)/2}(i,j) =
\begin{cases} 
(i,k-1+j)     & \textrm{if\ \ $j=1,2$} \\
(k+1-i,j-2) & \textrm{if\ \ $j>2$} 
\end{cases}
\]
and so
\[
\left(\sigma_\R^2 \sigma_\L^2 \right)^{(k-1)/2}\sigma_\L^2(i,j) =
\begin{cases} 
(i,k-1-j)      & \textrm{if\ \ $j<k-1$} \\
(k+1-i,2k-j) & \textrm{if\ \ $j\geq k-1$} 
\end{cases}
\]

%Another useful case is
%\[
%\sigma_\R^{-1} \sigma_\L (i,j) = 
%\begin{cases}
%(i+1,j+1) & \textrm{if $i < k$ and $j < k+1$} \\
%(j,1)   & \textrm{if $i = k$ and $j < k+1$} \\
%(1,i+1) & \textrm{if $j=k+1$}
%\end{cases}
%\]
%Most tiles are shifted down and to the right by one diagonally.

Our theorem uses exponents $\alpha\geq 2$. 
When $\alpha=2$ then \eqref{eqn:odd-case} acts as :
\begin{eqnarray}
\lefteqn{\left(\sigma_\R^{-1} \sigma_\L\right)^2 \left(\sigma_\R^2 \sigma_\L^2 \right)^{(k-1)/2} \sigma_\L^2 (i,j)} \label{eqn:actions-2} \\
&=&
\begin{cases} 
(i+2,k+1-j)
   & \textrm{if\ \ $i \leq k-2$    \ \ and\ \ $j\leq k-2$} \\
(k-j,i-(k-2))
   & \textrm{if\ \ $i >k-2$        \ \ and\ \ $j\leq k-2$} \\
(1,k+1-j)
   & \textrm{if\ \ $(i,j)=(1,k-1)$\ or\ $(1,k)$} \\ 
(k+1-j,k+1-(i-2))
   & \textrm{if\ \ $(i,j)=(i,k-1)$\ or\ $(i,k)$\ with\ $i\geq 2$ } \\
(k,3-i)
   & \textrm{if\ \ $(i,j)=(1,k+1)$\ or\ $(2,k+1)$} \\ 
(k+1-(i-2),k+1)
   & \textrm{if\ \ $(i,j)=(i,k+1)$\ with\ $i>2$}
\end{cases} \nonumber
\end{eqnarray}

% This was not re-checked, but everything I got from using it agrees with the later claims (sort of checking against old work).
When $3\leq\alpha\leq k$ then an inductive argument shows that \eqref{eqn:odd-case} acts as :
\begin{eqnarray}
\lefteqn{\left(\sigma_\R^{-1} \sigma_\L\right)^{\alpha} \left(\sigma_\R^2 \sigma_\L^2 \right)^{(k-1)/2} \sigma_\L^2 (i,j)} \label{eqn:actions} \\
&=&
\begin{cases} 
(\alpha-2-j,i+\alpha)
   & \textrm{if\ \ $i \leq k-\alpha+1$  \ \ and\ \ $j \leq \alpha-3$} \\
(i+\alpha-k-1,\alpha-2-j)
   & \textrm{if\ \ $i>k-\alpha+1$       \ \ and\ \ $j\leq \alpha-3$} \\
(i+\alpha,k+\alpha-1-j)
   & \textrm{if\ \ $i \leq k-\alpha$    \ \ and\ \ $\alpha-3<j<k-1$} \\
(k+\alpha-2-j,i+\alpha-k)
   & \textrm{if\ \ $i >k-\alpha$        \ \ and\ \ $\alpha-3<j<k-1$} \\
(\alpha-i,k+\alpha-1-j)
   & \textrm{if\ \ $i\leq \alpha-1$     \ \ and\ \ $k-1\leq j \leq k+1$} \\ 
(k+\alpha-1-j,k+\alpha+1-i)
   & \textrm{if\ \ $i>\alpha-1$         \ \ and\ \ $k-1\leq j\leq k+1$}
\end{cases} \nonumber
\end{eqnarray}

\noindent{\bf Main Case: $k\not\equiv 3\mod 18$}

Set $\alpha=3$, in accordance with Lemma \ref{lem:3-cycle}.
There is a 3--cycle $\left((1,2),\, (4,k),\, (2,k)\right)$, 
a $10$--cycle containing $(1,1)$ and $9$ other tiles,
an $8$--cycle containing $(2,1)$ and $7$ other tiles,
two $4$--cycles with one containing $(k,1)$ and the other $(k,2)$, 
and a fixed tile $(k-1,2)$.
We now specialize further: 

\bigskip\noindent
{\bf Subcase of $k\equiv 1\mod 6$: } 
Since $\alpha=3$ then the most common transition is when $i\leq k-3$ and $j < k-1$, in which case
\begin{equation} \label{eqn:series}
\onemodsixperm (i,j) = (i+3,k+2-j)
\end{equation}

Starting at $(1,j)$, where $4\leq j\leq k-2$, this induces a long sequence alternating between two types of $j$ terms: 
\begin{equation} \label{eqn:pattern}
(i,j)\to (i+3,k+2-j)\to (i+6,j)
\end{equation}
The pattern doesn't hold if $j<4$ or $j>k-2$, which is why we don't allow either.
In total the sequence contains $\frac{k+2}{3}$ terms, ending in $(k,j)$.

After this comes a short three-term sequence $(k+1-j,3)\to(k+4-j,k-1)\to(3,j)$.
The tile $(3,j)$ satisfies the requirements for \eqref{eqn:series} once again, leading to another sequence using pattern \eqref{eqn:pattern},
this time with $\frac{k-4}{3}$ additional terms, ending in $(k-1,k+2-j)$. 

Following this is another three-term sequence, $(j-1,2)\to(j+2,k)\to(2,k+2-j)$.
Once again repeatedly apply \eqref{eqn:series} to get $\frac{k-4}3$ additional terms,
ending with $(k-2,j)$.

Finally, following this is another three-term sequence, $(k+1-j,1)\to(k+4-j,k+1)\to(1,j)$. 
But $(1,j)$ is just what we started with, and so we are done.

The total number of terms in the cycle is then 
$\frac{k+2}3+3+\frac{k-4}3+3+\frac{k-4}3+2 = k+6$.
There is only one term of the form $(1,j)$ in each sequence, so every $(1,j)$ makes a distinct such sequence, 
and in particular there are $k-5$ such cycles of order $k+6$.

This, along with the $6$ cycles listed before our restriction to $k\equiv 1\mod 6$, accounts for all $k(k+1)$ tiles.
Since $k \equiv 1\mod6$ then $k+6\equiv 1\mod 6$ is not divisible by $3$,
and taking the $(\beta/3)$ power of \eqref{eqn:actions} then leaves only a $3$--cycle.

\bigskip\noindent
{\bf Subcase of $k\equiv 5\mod6$: }
This is nearly identical to the proof when $k\equiv 1\mod 6$, but with sequences ending at slightly different values.
However, again there are $k-5$ cycles of order $k+6$, each containing a member of 
$\{(1,j)\,\mid\,3<j<k-1\}$, and so again there is only one cycle of order divisible by $3$.

\bigskip\noindent
{\bf Subcase of $k \equiv 9,15\mod 18$: } 
Starting at a tile of the form $\left\{(i,j)\,\mid\,1\leq i\leq 3,\,3<j<k-1\right\}$, equation \eqref{eqn:series} gives a sequence of $\frac{k-3}3$ subsequent terms, ending on $(k-3+i,j)$.
This is followed by $(k+1-j,i)\to(k+4-j, k+2-i)\to(i,j)$, for a total of $\frac{k+6}3$ terms in each such cycle.
This gives a family of $3(k-5)$ cycles of order $\frac{k+6}3$ which, when combined with the cycles given before Case 2.1, accounts for all $k(k+1)$ tiles.
Since $\frac{k+6}{3}\equiv 1,\,5 \mod 6$ then $\frac{k+6}{3}$ is not divisible by $3$, and so once again there is only one cycle of order divisible by $3$.

\bigskip\noindent
\noindent{\bf Secondary Case: $k\equiv 3\mod 18$}

The approach used when $k\not\equiv 3\mod 18$ still applies.
However, some of the cycles previously found have order divisible by $3$ when $k\equiv 3\mod 18$, so a different $\alpha$ will be needed.
In fact, numerous subcases with different exponents $\alpha$ are required in order to avoid cycles of order divisible by $3$, and these subcases tend to have many more cycle types.
Following is a chart explaining the cycle structure for these remaining cases.
In each case the cycle type is listed along with exactly one tile from each such cycle.

The simplest case is when $k\equiv 3\mod 72$.
\[
\begin{array}{lllll}
\multicolumn{5}{c}{\underline{k\equiv 3\mod{72}} \ \ (\alpha=4) \hspace{1in}} \\
\underline{\textrm{cycle type}} & \underline{\textrm{one tile}}
 && \underline{\textrm{cycle type}} & \underline{\textrm{one tile}} \\
1 & (k-1,3)
 &\qquad& 10 & (1,2) \\
3 & (1,3) 
 && \frac{k+11}2 & (k-4,1),\, (k-4,2)  \\
4 & (k-2,2),\, (k-2,3) 
 && k+7 & \left\{(1,j)\,\mid\,8\leq j \leq k-2\right\} \\
8 & (2,2)
 && k+11 & (1,1),\, (1,k-1)
\end{array}
\]

When $k\equiv 21\mod 36$ then the theorem uses $\alpha=2$.
The group action in this case is given by \eqref{eqn:actions-2}.
Going through the cycle structure, as in the $\alpha=3$ case, we find it to be:
\[
\begin{array}{lllll}
\multicolumn{5}{c}{\underline{k\equiv 21\mod{36}} \ \ (\alpha=2) \hspace{1in}} \\
\underline{\textrm{cycle type}} & \underline{\textrm{one tile}}
 && \underline{\textrm{cycle type}} & \underline{\textrm{one tile}} \\
1 & (\frac{k+3}2,k+1),\,(k-1,1)
 &\qquad& k+4 & (1,\frac{k+1}{2}) \\
2 & \left\{(i,k+1)\,\mid\,3\leq i \leq \frac{k+1}2\right\}
 && \frac{3k+25}2 & (1,3) \\
3 & (1,1) 
 && 2k+8 & \left\{(1,j)\,\mid\,4\leq j\leq \frac{k-1}2\right\} \\
\frac{k+19}2 & (1,2) 
 && &
\end{array}
\]

\smallskip\smallskip
When $k\equiv 39\mod{72}$ there is a common set of cycle types, plus additional cycles 
depending on the value of $k$ modulo $360$.

\[
\begin{array}{lllll}
\multicolumn{5}{c}{\underline{k\equiv 39\mod{72}} \ \ (\alpha=12) \hspace{1in}} \\
\underline{\textrm{cycle type}} & \underline{\textrm{one tile}}
 && \underline{\textrm{cycle type}} & \underline{\textrm{one tile}} \\
1 & (k-1,11)
 &\qquad& 8 & (2,12) \\
3 & (1,11)
 && 10 & (1,12) \\
4 & (k,11),\, (k,12)
 && \frac{k+9}6 & \{(i,j)\,\mid\,12\leq i\leq k-14,\,1\leq j\leq 3\}
\end{array}
\]
\bigskip
% I don't know how I came up with these formulas in 1997, so instead I tried many examples.
% The orders all checked out. The listed items were also OK.
% I did not check that listed items are correct for the sets though (two of the k=111 cases).
% Presumably a pattern like the (k+9)/3, but each time it repeats some coordinate shifts by a bit.
\[
\begin{array}{lllll}
\multicolumn{2}{c}{\underline{k\equiv 39,255\mod{360}}}
  &\qquad& \multicolumn{2}{c}{\underline{k\equiv 111\mod{360}} \hspace{1in}} \\
\underline{\textrm{cycle type}} & \underline{\textrm{one tile}} 
  && \underline{\textrm{cycle type}} & \underline{\textrm{one tile}} \\
\frac16(192+19k+k^2) & (1,1),\, (1,2)
  && \frac{-201+k^2}{120}    & (2,22),\, (2,31) \\
\frac1{12}(111+16k+k^2) & (1,18),\, (1,19)  
  && \frac{309+30k+k^2}{120} & \left\{(i,j)\,\mid\, 1\leq i \leq 8,\, 1\leq j\leq 3\right\} \\
\multicolumn{2}{c}{\underline{k\equiv 183,327\mod{360}}}
  && \frac{-201+k^2}{30}     & (1,21),\, (1,22),\, (1,30),\, (1,31) \\
\frac16(192+19k+k^2) & (1,1),\, (1,2) 
  && \frac{339+20k+k^2}{120} & \{(i,j)\,\mid\,k-8\leq i\leq k-3,\,10\leq j\leq 12\} \\
\frac1{12}(111+16k+k^2) & (1,14),\, (1,15)
  && &  
\end{array}
\]

\end{proof}

%********************** Main proof **********************

\subsection{Proof of Theorem \ref{thm:main}} \label{subsec:proof}

\begin{proof}[Proof of Theorem \ref{thm:main}]
Recall that our proof will utilize the fact that a subgroup of $S_n$ which contains a $3$--cycle and is doubly transitive is either $A_n$ or $S_n$.

The first step is to show that for any admissible figure, $G$ is doubly transitive on its orbit(s). By Lemma \ref{lem:double-transitive} $G$ is doubly transitive on its orbits for a single $k\times (k + 1)$ or $(k+1)\times k$ rectangle. Proceeding by induction, assume that $G$ is doubly transitive on all of its orbits for an admissible figure constructed from $r$ rectangles of dimension $k\times (k + 1)$ or $(k+1)\times k$.
Add another $k\times (k + 1)$ or $(k+1)\times k$ rectangle to this figure so that the resulting figure, which is constructed from $r + 1$ such rectangles, is admissible. There are at least $k$ tiles which belong to the original figure but not to the added rectangle.  
Choose one of these tiles and call it $x$. By hypothesis, the stabilizer of $x$ in the original figure is transitive, while the generators of the new rectangle are transitive.
Since the original figure and the new rectangle overlap in the orbit of $x$, but not at $x$ itself, then the stabilizer of $x$ in the new figure is also transitive and the figure is doubly transitive on the orbit of $x$.
Likewise, if $k$ is odd then $x$ can be chosen to be in either of the two orbits, so the figure is doubly transitive on each orbit. 
Hence for any admissible figure, $G$ is doubly transitive on its orbits.

Next we combine double-transitivity and the $3$--cycles already proven to exist.
This greatly limits the number of groups that are possible, and we then refine this down to a single possible answer in each case.

\bigskip\noindent 
{\bf Case 1 ($k>3$ is even) : } 
Lemma \ref{lem:3-cycle} shows that $G$ contains a 3--cycle.
The previous paragraph establishes that $G$ is doubly transitive, and so $G=A_n$ or $S_n$. 
Since $k$ is even then each generator consists of $\frac{k^2}4$ disjoint $4$--cycles.
As a result, if $k\equiv 0\mod 4$ then the generators are even permutations, and so $G\leq A_n$, implying $G=A_n$.
If $k\equiv 2\mod 4$ then the generators are odd permutations and so $G\neq A_n$, implying $G=S_n$. 

\bigskip\noindent
{\bf Case 2 ($k>3$ is odd) : }
There are two orbits, of some $m$ and $(n-m)$ tiles each, and so $G\leq S_m\times S_{n-m}$. 
From Lemma \ref{lem:3-cycle} there is a $3$--cycle $\sigma\times 1$ on the $m$-element orbit.
The proof of Jordan's Theorem generates $A_m$ by conjugating this specific $3$--cycle and multiplying the resulting terms.
Since ${} ^\tau {}(\sigma\times 1) = {} ^\tau {}\sigma \times 1 \in S_m\times 1$ then Jordan's Theorem implies that $A_m\times 1\leq G$.
% ** COMMENT ** Can I make this self-contained? Maybe comment on conjugation of $3$--cycles giving all $3$-cycles?
Likewise, $1\times A_{n-m}\leq G$. Hence $A_m\times A_{n-m}\leq G$.

Each generator consists of $\frac{k^2-1}4$ disjoint $4$--cycles, exactly $\frac{k^2-1}8$ in each of the two orbits.

If $k\equiv 1,7\mod 8$ then $\frac{k^2-1}8$ is even, and so the $\frac{k^2-1}{8}$ disjoint $4$--cycles in each orbit make an even permutation in that orbit.
It follows that $G\leq A_m\times A_{n-m}$, and so in fact $G=A_m\times A_{n-m}$.

If $k\equiv 3,5\mod 8$ then $\frac{k^2-1}8$ is odd and so the generators $\sigma_\L$ and $\sigma_\R$ act on each orbit as an odd permutation, but are themselves even permutations, and so $A_m\times A_{n-m}\lneq G\leq Even(S_m\times S_{n-m})$.
However, the only group satisfying $A_m\times A_{n-m}\lneq G\leq Even(S_m\times S_{n-m})$ is $G=Even(S_m\times S_{n-m})$.
To see this observe that since $G\leq Even(S_m\times S_{n-m})$ then every $g\in G$ acts as an even permutation on both orbits or as an odd permutation on both orbits.
It follows that if $g\in G\setminus A_m\times A_{n-m}$ and $h\in Even(S_m\times S_{n-m})\setminus (A_m\times A_{n-m})$ then they act as odd permutations on both orbits, and therefore $g\,h^{-1}$ acts as an even permutation on each orbit, i.e. $g\,h^{-1}\in A_m\times A_{n-m}\leq G$ and so $h\in g\,G = G$.

\bigskip\noindent 
{\bf Case 3 ($k=2$) : } 
When $n=6$ (a $2\times 3$ rectangle) then the generators are
\begin{eqnarray}
\sigma_\L &=& ((1,2),\, (2,2),\, (2,1),\, (1,1)) \label{eqn:k2-gen} \\
\sigma_\R &=& ((2,2),\, (1,2),\, (1,3),\, (2,3)) \nonumber
\end{eqnarray}
Equivalently, if we label $(1,2)$ as $\infty$ and then number from $0$ to $4$ counterclockwise starting at $(1,1)\to 0$ and ending at $(1,3)\to 4$, then the generators are $\sigma_\L=(0,\infty,2,1)$ and $\sigma_\R=(\infty,4,3,2)$.
An alternate set of generators is $g_1=\sigma_\L^{-1}=(\infty,0,1,2)$ and $g_2=\sigma_\L^{-1}\sigma_\R^{-1} = (0,1,2,3,4)$.
The projective group $PGL_2(5)$ includes the following transformations on ${\mathbb Z}_5$:
\[
PGL_2(5)=\left\{z\to \frac{az+b}{cz+d}\,:\,a,\,b,\,c,\,d\in {\mathbb Z}_5,\ ad-bc\neq 0\right\}
\]
The generator $g_1$ is the transformation $z\to 3/(z+3)$, while $g_2$ is the transformation $z\to z+1$, so $G\leq PGL_2(5)$.
Those two transformations in fact generate $PGL_2(5)$ (e.g. \cite{Wil74.1}), and so under an appropriate labeling of vertices then $G=PGL_2(5) \cong S_5$.

% Equivalently, under a simpler labeling they are $\sigma_\L=(a,\,b,\,c,\,d)$ and $\sigma_\R^{-1}=(b,\,c,\,e,\,f)$.
%This is just a Hungarian Rings puzzle with two rings of length $4$, overlapping at $(1,2)$ and $(2,2)$.
%The permutation group for this is the $\Theta_0$ group from graph theory ** WHAT DOES THIS MEAN? ** , which is isomorphic to $S_5$. 
% g1 := Cycles[{{1, 2, 5, 4}}];
% g2 := Cycles[{{2, 3, 6, 5}}];
% A := PermutationGroup[{g1, g2}];
% GroupOrder[A]
% --> 120
% PermutationProduct[g2, g2, g2, g1, g1, g1]
% --> Cycles[{{1, 4, 5, 6, 3}}]
% Wilson's characterizes PGL_2(5) with generators $\sigma_p=(0 1 2 3 4)$ and 
% $\sigma_q=(\infty 0 1 2)$. In this case identify $g1$ with $\sigma_q$ and $

Suppose instead that $n>6$. The proof of Case 1 carries through as long as there is a $3$--cycle.
The construction of every admissible figure with $n>6$ starts by overlapping two regions of sizes $2\times 3$ and/or $3\times 2$.
Up to symmetry (rotation or reflection through an axis) this region will contain either a $2\times 4$,
a $2\times 3$ joined at a corner square to a $2\times 2$, or two $2\times 3$ joined at a $90^{\circ}$ angle to create a $3\times 3$ missing a corner.
More concretely, let $\sigma_1 = \sigma_\L$ and $\sigma_2 = \sigma_\R$ be the left and right generators defined in \eqref{eqn:k2-gen}.
A third generator $\sigma_3$ and a $3$--cycle will now be designated in each of the three cases just discussed:
% Commutators don't seem to work so well when there is significant overlap, but in this case if no commutator is a $3$--cycle then we make minor adjustment 
% or cancellation to get a $3$-cycle paired with many other orders, and raise to a small power to cancel out the other terms.
\begin{eqnarray*}
\sigma_3 &=& ((1,3),\, (1,4),\, (2,4),\, (2,3)) \\
\textrm{3--cycle} && \left(\sigma_3^2[\sigma_2,\sigma_1]\right)^2 = ((1,2),\, (1,3),\, (2,4)) \\
\sigma_3 &=& ((2,3),\, (2,4),\, (3,4),\, (3,3)) \\
\textrm{3--cycle} && [\sigma_3,\sigma_2] = ((2,2),\,(2,3),\,(2,4)) \\
\sigma_3 &=& ((2,2),\, (2,3),\, (3,3),\, (3,2)) \\
\textrm{3--cycle} && [\sigma_3,\sigma_1] = ((2,1),\, (2,2),\, (2,3))
\end{eqnarray*}

\bigskip\noindent 
{\bf Case 4 ($k=3$) : } 
When $n=12$ then the pair of generators are
\begin{eqnarray}
\sigma_\L &=& 
  ((1,1),\, (1,3),\, (3,3),\, (3,1))\ 
  ((1,2),\, (2,3),\, (3,2),\, (2,1))  \label{eqn:k3-gen} \\  
\sigma_\R &=&
  ((1,2),\, (1,4),\, (3,4),\, (3,2))\ 
  ((1,3),\, (2,4),\, (3,3),\, (2,2)) \nonumber
\end{eqnarray}

Consider action on the orbit $E = \{(i,j):\,i+j\textrm{ is even}\}$.
The puzzle group $G$ is doubly-transitive on each orbit and contains the $3$--cycle  
$\left.(\sigma_\L\,\sigma_\R^2)^2\right|_E = ((1,1),\,(3,1),\,(1,3))$,
and so $A_6 \leq \left.G\right|_E$.
But $\left.G\right|_E$ contains the odd permutation
$\left.\sigma_L\right|_E=((1,1),\, (1,3),\, (3,3),\, (3,1))$,
and so $\left.G\right|_E = S_6$. It can be verified by brute force (e.g. GAP or Mathematica or a very long exercise) that $|G|=6!$, and so in fact $G\cong S_6$.
%% Mathematica Command
% A := PermutationGroup[{Cycles[{{1, 2, 3, 4}, {5, 6, 7, 8}}], Cycles[{{5, 9, 10, 7}, {2, 11, 3, 12}}]}]; 
% GroupOrder[A]

When $n>12$ then once again start with a $3\times 4$ region and attach a $3\times 4$ or $4\times 3$ to make a larger admissible figure.
This time two $3$--cycles are needed, one in the orbit $E$ and another in the orbit $E^c$.
Up to symmetry (rotation or reflection through an axis) this figure will contain either a $3\times 5$,
a $3\times 4$ joined by two squares near a corner to a $3\times 3$ (two cases),
or two $3\times 4$ joined at a $90^{\circ}$ angle to create a $4\times 4$ missing a corner.
More concretely let $\sigma_1 = \sigma_\L$ and $\sigma_2 = \sigma_\R$ be the left and right generators defined in \eqref{eqn:k3-gen}.
A third generator $\sigma_3$ and a $3$--cycle on each orbit will now be designated in each of the four cases just mentioned.
%% Initial Segment in Mathematica:
% g1 := Cycles[{{1, 3, 11, 9}, {2, 7, 10, 5}}];
% g2 := Cycles[{{2, 4, 12, 10}, {3, 8, 11, 6}}];
% Comm[x_, y_] := PermutationProduct[y, y, y, x, x, x, y, x];
% f[x_, y_] := PermutationProduct[x, x, y, y, x, x, y, y];
\begin{eqnarray*}
%% Mathematica
% g3 := Cycles[{{3, 13, 15, 11}, {4, 14, 12, 7}}];
% Comm[g1,g3]
% PermutationProduct[g1,g2,g2,g2,g3,g3]
\sigma_3 &=& ((1,3),\, (1,5),\, (3,5),\, (3,3))\,((1,4),\, (2,5),\, (3,4),\, (2,3)) \\
&&\textrm{3--cycles} \ [\sigma_1,\sigma_3]^2 =((1,4),\,(3,2),\,(2,3)) \\
&&\ \textrm{ and }\ (\sigma_3^2\sigma_2^{-1}\sigma_1)^{20} =((1,3),\, (3,5),\, (2,4))
\\
%% Mathematica
% g3 := Cycles[{{8, 14, 19, 17}, {13, 16, 18, 12}}];
% PermutationProduct[f[g1,g2],g3,g3]
% PermutationProduct[f[g2, g3],g1,g1]
\sigma_3 &=& ((2,4),\,(2,6),\,(4,6),\,(4,4))\,((2,5),\,(3,6),\,(4,5),\,(3,4)) \\
&&\textrm{3--cycles} \ (\sigma_3^2\,[\sigma_2^2,\sigma_1^2])^2 
\ \textrm{ and }\ (\sigma_1^2\,[\sigma_3^2,\sigma_2^2])^4
\\
%% Mathematica
% g3 := Cycles[{{11, 13, 19, 17}, {12, 16, 18, 14}}];
% Comm[g1, g3]
% PermutationProduct[Comm[g2, g3], Comm[g1, g3]]
\sigma_3 &=&  ((3,3),\,(3,5),\,(5,5),\,(5,3))\,((3,4),\,(4,5),\,(5,4),\,(4,3)) \\
&&\textrm{3--cycles} \ [\sigma_1,\sigma_3]^4 
\ \textrm{ and } \ \left([\sigma_1,\sigma_3]\, [\sigma_2,\sigma_3] \right)^2
\\
%%% Mathematica - This is just an L shape, which is covered in the (2,2) case.
%% g3 := Cycles[{{9, 11, 18, 16}, {10, 15, 17, 13}}];
%% Comm[g1, g3]
%% PermutationProduct[Comm[g2, g3], Comm[g2, g3], Comm[g1, g3], Comm[g1, g3]]
%\sigma_3 &=&  ((3,1),\,(3,3),\,(5,3),\,(5,1))\,((3,2),\,(4,3),\,(5,2),\,(4,1)) \\
%&&\textrm{3--cycles} \ [\sigma_1,\sigma_3]^2 
%\ \textrm{ and } \ \left([\sigma_1,\sigma_3]^2\,[\sigma_2,\sigma_3]^2 \right)^2
%
%% Mathematica
% g3 := Cycles[{{6, 8, 15, 13}, {7, 12, 14, 10}}];
% PermutationProduct[f[g1,g3],g2]
% PermutationProduct[g1, g1, g2, g2, g3]
\sigma_3 &=& ((2,2),\, (2,4),\, (4,4),\, (4,2))\,((2,3),\, (3,4),\, (4,3),\, (3,2)) \\
&&\textrm{3--cycles} \ (\sigma_2\,[\sigma_3^2,\sigma_1^2])^4 
\ \textrm{ and }\ (\sigma_3\sigma_2^2\sigma_1^2)^{20}
%% Mathematica - Couldn't get this to work, but (3,1) rotation and (2,2) rotation cover this case.
%% g3 := Cycles[{{5, 7, 15, 13}, {6, 11, 14, 9}}];
%% PermutationProduct[g1, f[g2, g3]]
%% ??
%\sigma_3 &=& ((2,1),\,(2,3),\,(4,3),\,(4,1))\,((2,2),\,(3,3),\,(4,2),\,(3,1)) \\
%&&\textrm{3--cycles} \ ([\sigma_3^2,\sigma_2^2]\sigma_1)^4
%\ \textrm{ and }\ ??
%\\ 
%% Mathematica - Unnecessary, the other rotation is already a case.
%% g3 := Cycles[{{7, 13, 17, 15}, {8, 14, 16, 11}}];
%% PermutationProduct[Comm[g1,g2],g3]
%% PermutationProduct[f[g1,g3],g2,g2]
%\sigma_3 &=& ((2,3),\,(2,5),\,(4,5),\,(4,3))\,((2,4),\,(3,5),\,(4,4),\,(3,3)) \\
%&&\textrm{3--cycles} \ (\sigma_3\,[\sigma_2,\sigma_1])^8
%\ \textrm{ and }\ (\sigma_2^2[\sigma_3^2,\sigma_1^2])^4
%\\ 
\end{eqnarray*}
In some cases the generator $\sigma_3$ may not appear in either of the regions being overlapped, 
such as when overlapping two $3\times 4$ regions to make a $3\times 7$ region. However, we are studying the group 
generated by all the possible rotations of $k \times k$ squares in the tile arrangement, and so $\sigma_3$ is still a 
valid rotation in the union of the two $3\times 4$ regions.
\end{proof}

%************* Acknowledgements ****************

\section*{Acknowledgments}

We are grateful to the Mathematics Department at Texas Christian University for hosting the REU during which this work was done. Special thanks to George T. Gilbert and Rhonda L. Hatcher for their guidance and to David Addis for his assistance with GAP. Thanks also go to Craig Morgenstern who patiently fixed the disasters we caused to his computer systems.
We particularly appreciate the efforts of an anonymous referee.

%********************** References ********************************

\end{document}